\newtheorem{thm}{Theorem}[section]
\newtheorem{lem}[thm]{Lemma}
\newtheorem{cor}[thm]{Corollary}
\newtheorem{prop}[thm]{Proposition}
\theoremstyle{definition}
\newtheorem{defn}[thm]{Definition}
\newtheorem{remark}[thm]{Remark}
\newcommand{\NN}{{\Bbb N}}
\newcommand{\QQ}{{\Bbb Q}}
\newcommand{\ZZ}{{\Bbb Z}}
\newcommand{\cE}{{\mathcal E}}
\newcommand{\cF}{{\mathcal F}}
\newcommand{\cO}{{\mathcal O}}
\newcommand{\cT}{{\mathcal T}}
\newcommand{\G}{{\Gamma}}
\newcommand{\Om}{{\Omega}}
\newcommand{\s}{{\sigma}}
\newcommand{\w}{{\omega}}
\newcommand{\factor}{{L^{\infty}(\Om)\rtimes\G}}
\newcommand{\aut}{{\text{\rm Aut}}}
\newcommand{\tl}{{{\text{\rm{III}}}_{\lambda}}}
\newcommand{\tn}{{{\text{\rm{III}}}_{1/n}}}
\newcommand{\tone}{{{\text{\rm{III}}}_{1}}}
\newcommand{\tp}{{{\text{\rm{III}}}_{1/p}}}
\begin{document}

\title{Factors from trees}
\author{Jacqui Ramagge}
\address{Mathematics Department, University of Newcastle, Callaghan, NSW 2308,
Australia}
\email{jacqui@maths.newcastle.edu.au}
\author{Guyan Robertson}
\address{Mathematics Department, University of Newcastle, Callaghan, NSW 2308,
Australia}
\email{guyan@maths.newcastle.edu.au}

\maketitle

\begin{abstract}
We construct factors of type $\tn$  for $n\in\NN, n\geq 2$ from group actions on
homogeneous trees and their boundaries.  Our result is a discrete analogue of a
result of R.J~Spatzier, namely~\cite[Proposition~1]{Spa},  where the
hyperfinite factor of type $\tone$ is constructed from a group action on the
boundary of the universal cover of a manifold.
\end{abstract}

\section{Introduction}

Let $\G$ be a group acting simply transitively on the vertices of a homogeneous
tree $\cT$ of degree $n+1<\infty$.  Then, by~\cite[Ch.~I, Theorem~6.3]{FTN},
\[
\G\cong\ZZ_2 * \cdots * \ZZ_2* \ZZ * \cdots * \ZZ
\]
where there are $s$ factors of $\ZZ_2$, $t$ factors of $\ZZ$, and $s+2t=n+1$.
Thus $\G$ has a presentation
\[
\G = \left< a_1 ,\ldots ,a_{s+t} : a_{i}^2 = 1 \text{ for } i\in \{ 1,
\ldots , s\} \right>,
\]
we can identify the Cayley graph of $\G$ constructed via right
multiplication with $\cT$ and the action of $\G$ on $\cT$ is equivalent
to the natural action of $\G$ on its Cayley graph via left multiplication.

We can associate a natural boundary to $\cT$, namely the set $\Om$ of
semi-infinite reduced words in the generators of $\G$. The action of $\G$
on $\cT$
induces an action of $\G$ on $\Om$.

For each $x\in\G$, let
\[
\Om^x = \{ \w\in\Om : \w= x\cdots \}
\]
be the set of semi-infinite reduced words beginning with $x$. The set
$\left\{ \Om^x \right\}_{x\in\G}$ is a set of basic open sets for a compact
Hasudorff topology on $\Om$. Denote by $|x|$ the length of a reduced
expression for $x$. Let $V^m=\{x\in\G : |x|=m\}$ and define
$N_m=\left| V^m\right|$. Then $\Om$ is the disjoint union of the $N_m$ sets
$\Om^x$ for $x\in V^m$.

We can also endow $\Om$ with the structure of a measure space. $\Om$ has a
unique distinguished Borel probability measure $\nu$ such that
\[
\nu\left(\Om^x\right) = \frac{1}{n+1}\left(\frac{1}{n}\right)^{|x|-1}
\]
for every nontrivial $x\in\G$. The sets $\Om^x$, $x\in\G$ generate the Borel
$\s$-algebra.

This measure $\nu$ on $\Om$ is quasi-invariant under the action of $\G$, so that
$\G$ acts on the measure space $(\Om,\nu)$ and enabling us to extend the
action of
$\G$  to an action  on $L^{\infty}(\Om,\nu)$ via
\[
g\cdot f(\w) = f (g^{-1}\cdot\w)
\]
for all $g\in\G$, $f\in L^{\infty}(\Om,\nu)$, and $\w\in\Om$. We may therefore
consider the von Neumann algebra $L^{\infty}(\Om,\nu)\rtimes\G$ which we shall
write as  $\factor$ for brevity.

\section{The Factors}

We note that the action of $\G$ on $\Om$ is free since if $g\w=\w$ for some
$g\in\G$ and $\w\in\Om$ then we must have either $\w=ggg\cdots$ or
$\w=g^{-1}g^{-1}g^{-1}\cdots$ and
\[
\nu\left( { ggg\cdots, g^{-1}g^{-1}g^{-1}\cdots}\right) =0.
\]

The action of $\G$ on $\Om$ is also ergodic by the proof
of~\cite[Proposition~3.9]{PS}, so that $\factor$ is a factor. Establishing
the type of the factor is not quite as straightforward. We begin by
recalling some
classical definitions.

\begin{defn}
Given a group $\G$ acting on a measure space $\Om$, we define the {\bf full
group}, $[\G]$, of $\G$ by
\[
[\G]=\left\{ T\in\aut(\Om) : T \w\in \G\w \text{ for almost every
}\w\in\Om\right\}.
\]
The set $[\G]_0$ of measure preserving maps in $[\G]$ is then given by
\[
[\G]_0 =\left\{ T\in [\G] : T{\scriptstyle\circ}\nu =\nu\right\}
\]
\end{defn}

\begin{defn}
Let $G$ be a countable group of automorphisms of the measure space $(\Om,\nu)$.
Following W.~Krieger, define the {\bf ratio set} $r(G)$ to be the subset of
$[0,\infty)$ such that if $\lambda\geq 0$ then $\lambda\in r(G)$ if and
only if for
every $\epsilon>0$ and Borel set $\cE$ with $\nu(\cE)>0$, there exists a
$g\in G$
and a Borel set $\cF$ such that $\nu(\cF)>0$, $\cF\cup g\cF\subseteq\cE$ and
\[
\left| \frac{d\nu{\scriptstyle\circ}g}{d\nu}(\w) -\lambda \right| <\epsilon
\]
for all $\w\in\cF$.
\end{defn}

\begin{remark}\label{full group determines ratio set}
The ratio set $r(G)$ depends only on the quasi-equivalence class of the measure
$\nu$, see~\cite[\S I-3, Lemma 14]{HO}. It also depends only on the full
group in the
sense that
\[
[H]=[G] \Rightarrow r(H)=r(G).
\]
\end{remark}

The following result will be applied in the special case where
$G=\G$. However, since the simple transitivity of the action doesn't play a
role in
the proof, we can state it in greater generality.

\begin{prop}\label{ratio set of G}
Let $G$ be a countable subgroup of $\aut(\cT)\leq\aut(\Om)$. Suppose there exist
an element $g\in G$ such that $d(ge,e)=1$ and a subgroup $K$ of $[G]_0$ whose
action on $\Om$ is ergodic. Then
\[
r(G)=\left\{ n^{k}: k\in\ZZ\}\cup\{ 0\right\}.
\]
\end{prop}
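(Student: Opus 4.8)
The plan is to prove this by computing the ratio set directly from the definition, exploiting the explicit formula for the measure $\nu$ on the basic sets $\Om^x$. First I would note that since $K \subseteq [G]_0$ acts ergodically and measure-preservingly, Remark~\ref{full group determines ratio set} lets me work with the full group $[G]$, which contains $K$; ergodicity of the measure-preserving subgroup $K$ will be the engine that lets me "spread" any test set $\cE$ around $\Om$. The key geometric input is the single generator $g$ with $d(ge,e)=1$: because $g$ moves the base vertex $e$ by one edge, the Radon--Nikodym cocycle $\frac{d\nu\circ g}{d\nu}$ should take values that are powers of $n$, since the measure formula $\nu(\Om^x)=\frac{1}{n+1}(1/n)^{|x|-1}$ changes by a factor of exactly $n^{\pm 1}$ each time the word-length changes by one.

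The main computation I would carry out is to evaluate the Radon--Nikodym derivative explicitly. For $\w \in \Om$ and $h \in G \leq \aut(\cT)$, I expect $\frac{d\nu\circ h}{d\nu}(\w) = n^{-\beta(h,\w)}$, where $\beta(h,\w)$ is the Busemann-type horocycle index measuring how far $h$ pushes $\w$ towards or away from the boundary point it fixes asymptotically. Concretely, on a basic set $\Om^x$ on which $h$ acts by a translation that shifts word-length by a fixed integer $k$, the derivative is the constant $n^{-k}$; this follows by taking ratios of the measure formula. Thus the derivative takes only values in $\{n^k : k \in \ZZ\}$, which immediately gives the inclusion $r(G) \subseteq \{n^k : k\in\ZZ\}\cup\{0\}$ (with $0 \in r(G)$ automatic for any non-singular properly ergodic action, or handled by the same spreading argument).

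For the reverse inclusion I would show each $n^k$ lies in $r(G)$. Fix $\epsilon>0$, $k\in\ZZ$, and a Borel set $\cE$ with $\nu(\cE)>0$. Using ergodicity of $K$ (which is measure-preserving, so it only relocates sets without affecting the derivative of $g$), I would first find a $K$-translate that places positive mass inside a single small basic set $\Om^x$ where the cocycle $\frac{d\nu\circ g^{k}}{d\nu}$ (or an appropriate conjugate $w g^{k} w^{-1}$ built from $g$ and elements of $K$) is identically the constant $n^{k}$. I would then set $\cF$ to be this portion of $\cE$, chosen small enough that both $\cF$ and $g^{k}\cF$ sit inside $\cE$; on $\cF$ the derivative is exactly $n^{k}$, so the inequality $\left|\frac{d\nu\circ g^{k}}{d\nu}(\w)-n^{k}\right|<\epsilon$ holds trivially.

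The hard part will be the bookkeeping in the second direction: producing, for a \emph{given} arbitrary positive-measure set $\cE$, an element realizing the shift $n^k$ together with a subset $\cF$ satisfying the containment $\cF \cup g\cF \subseteq \cE$. The element $g$ alone shifts length by only $\pm 1$ and does not fix the relevant boundary point, so I expect to conjugate powers of $g$ by suitable elements of the ergodic group $K$ to manufacture transformations whose Radon--Nikodym derivative is the constant $n^k$ on a prescribed small basic set, while simultaneously keeping the image inside $\cE$. Reconciling the ergodicity-driven relocation of $\cE$ (which must not disturb the cocycle value) with the length-shifting action of $g$ is the delicate point; once the derivative is pinned to a constant on a small enough cylinder, the estimate and the containment follow routinely.
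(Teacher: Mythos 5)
Your outline assembles the right ingredients --- the Radon--Nikodym derivative of a tree automorphism takes values in $\{n^k : k\in\ZZ\}\cup\{0\}$, which gives one inclusion, and the measure-preserving ergodic group $K$ is indeed the tool for relocating an arbitrary positive-measure set $\cE$ without disturbing the cocycle --- but the step you yourself flag as ``the delicate point'' is exactly where the proof lives, and your sketch of it does not work as stated. You propose to take $\cF$ ``chosen small enough that both $\cF$ and $g^{k}\cF$ sit inside $\cE$''. No amount of shrinking $\cF$ achieves this: $g^{k}$ may move every point of the relevant cylinder entirely outside $\cE$. The paper's resolution is a \emph{second} application of ergodicity of $K$, on the output side. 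Having found $k_1\in K$ so that $\{\w\in\cE : k_1\w\in\Om^x\}$ has positive measure (where $x=ge$, so that $\frac{d\nu\circ g^{-1}}{d\nu}$ is constantly $n$ on $\Om^x$), one finds $k_2\in K$ so that $\cF=\{\w\in\cE : k_1\w\in\Om^x \text{ and } k_2g^{-1}k_1\w\in\cE\}$ still has positive measure, and sets $t=k_2g^{-1}k_1$. Then $\cF\cup t\cF\subseteq\cE$ by construction, and since $k_1,k_2$ preserve $\nu$ the chain rule gives $\frac{d\nu\circ t}{d\nu}(\w)=\frac{d\nu\circ g^{-1}}{d\nu}(k_1\w)=n$ on $\cF$. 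For $t$ to count as an element of the acting group one first replaces $G$ by $\left< G,K\right>$, which has the same full group and hence, by Remark~\ref{full group determines ratio set}, the same ratio set.

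You are also making the hard direction harder than necessary by trying to realize every value $n^k$ separately. Since the action is ergodic, $r(G)\setminus\{0\}$ is a multiplicative group, so it suffices to put the single value $n$ into the ratio set; this is the reduction the paper uses. Your plan to realize $n^k$ via $g^{k}$ (or a single conjugate $wg^{k}w^{-1}$) meets a concrete obstruction: the hypothesis only gives $d(ge,e)=1$, and $g$ may well be an involution, in which case $g^{2}=e$ and powers of $g$ only ever realize the values $1$ and $n^{\pm1}$. To reach $n^k$ for $|k|\geq 2$ you would have to chain together $k$ sandwiched copies of $g$ separated by elements of $K$, which amounts to re-deriving the group property of $r(G)\setminus\{0\}$ by hand. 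In summary: correct strategy, but the decisive two-sided ergodicity construction is named rather than executed, and the reduction to the single value $n$ is missing.
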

\begin{proof}
By Remark~\ref{full group determines ratio set}, it is
sufficient to prove the statement for some group $H$ such that $[H]=[G]$. In
particular, since $[G]= [\left< G, K\right> ]$ for any subgroup $K$ of
$[G]_0$, we may
assume without loss of generality that $K\leq G$.

By~\cite[Chapter~2, part 1)]{FTN}, for each $g\in G$ and $\w\in\Om$ we have
\[
\frac{d\nu{\scriptstyle\circ}g}{d\nu}(\w)\in\left\{ n^{k}: k\in\ZZ\}\cup\{
0\right\}.
\]
Since $G$ acts ergodically on $\Om$, $r(G)\setminus\{ 0\}$ is a group. It is
therefore enough to show that $n\in r(G)$.  Write $x=ge$ and note that
$\nu_x=\nu{\scriptstyle\circ}g^{-1}$. By ~\cite[Chapter~2, part1)]{FTN} we
have
\begin{equation}\label{q2 derivative}
\frac{d\nu_x}{d\nu}(\w) =n, \text{ for all } \w\in\Om_e^x.
\end{equation}
Let $\cE\subseteq\Om$ be a Borel set with $\nu(\cE)>0$. By the ergodicity of $K$,
there exist $k_1,k_2\in K$ such that the set
\[
\cF=\{\w\in \cE: k_1\w\in\Om_e^x \text{ and } k_2g^{-1}k_1\w\in \cE\}
\]
has positive measure.

Finally, let $t=k_2g^{-1}k_1\in G$. By construction, $\cF\cup t\cF\subseteq
\cE$.
Moreover, since $K$ is measure-preserving,
\[
\frac{d\nu{\scriptstyle\circ}t}{d\nu}(\w)=
\frac{d\nu{\scriptstyle\circ}g^{-1}}{d\nu}(k_1\w)=
\frac{d\nu_x}{d\nu}(k_1\w) =n \text{ for all } \w\in \cF
\]
by~\eqref{q2 derivative}, since $k_1\in\Om_e^x$. This proves $n\in r(G)$, as
required.
\end{proof}

\begin{cor}\label{factor if free and ergodic}
If, in addition to the hypotheses for Proposition~\ref{ratio set of G}, the
action of
$G$ is free, then $L^\infty(\Om)\rtimes G$ is a factor of type $\tn$.
\end{cor}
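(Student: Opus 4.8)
The plan is to obtain the type directly from Krieger's classification, which reads the type of the factor attached to a free ergodic nonsingular action off its ratio set. Essentially all of the work has already been done in Proposition~\ref{ratio set of G}; the corollary is the step that translates the computed ratio set into a type. So I would first verify that the hypotheses of Krieger's theorem hold and then substitute $r(G)=\{0\}\cup\{n^k:k\in\ZZ\}$.

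I would begin by noting that ergodicity of $G$ is already implicit in Proposition~\ref{ratio set of G}: the subgroup $K\leq[G]_0$ assumed there acts ergodically, and since $K\subseteq[G]$ each element of $K$ carries every $G$-orbit into itself, so any $G$-invariant Borel set is automatically $K$-invariant; hence $K$-ergodicity forces $G$-ergodicity. Combined with the new freeness hypothesis, this places us in the setting of Krieger's theorem (as presented in~\cite{HO}), under which $L^\infty(\Om)\rtimes G$ is a factor whose type is determined by the ratio set $r(G)$.

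Next I would apply the ratio-set dictionary: the factor is of type $\tl$ with $0<\lambda<1$ precisely when $r(G)=\{0\}\cup\{\lambda^k:k\in\ZZ\}$. By Proposition~\ref{ratio set of G} we have $r(G)=\{0\}\cup\{n^k:k\in\ZZ\}$; since $k$ ranges over all of $\ZZ$, replacing $n$ by $1/n$ leaves this set unchanged, so it equals $\{0\}\cup\{(1/n)^k:k\in\ZZ\}$. As $n\geq 2$ we have $1/n\in(0,1)$, and the dictionary then forces the factor to be of type $\tn$.

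The argument carries no serious obstacle, since its substance is entirely contained in the ratio-set computation already in hand. The single point I would treat with care is the normalization convention: the Krieger invariant $\lambda$ is conventionally taken in $(0,1)$, whereas Proposition~\ref{ratio set of G} naturally produces powers of $n>1$; this is reconciled by the symmetry $\{n^k:k\in\ZZ\}=\{(1/n)^k:k\in\ZZ\}$ noted above. I would also record that $(\Om,\nu)$ is a standard probability space, being compact and metrizable with a Borel probability measure, so that Krieger's theorem applies without modification.
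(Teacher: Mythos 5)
Your proposal is correct and follows essentially the same route as the paper, which simply invokes Connes' Corollaire~3.3.4 (the free-ergodic-action dictionary from ratio set to type) once the ratio set is known; you cite the equivalent Krieger formulation from~\cite{HO} instead. The details you add --- deducing $G$-ergodicity from the ergodicity of $K\leq[G]_0$, and the normalization $\{n^k:k\in\ZZ\}=\{(1/n)^k:k\in\ZZ\}$ explaining why the type is $\tn$ rather than ``${\text{\rm{III}}}_n$'' --- are correct and are left implicit in the paper.
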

\begin{proof}
Having determined the ratio set, this is immediate
from~\cite[Corollaire~3.3.4]{Connes73}.
\end{proof}

Thus, if we can find a countable subgroup $K\leq [\G]_0$ whose action on
$\Om$ is
ergodic we will have shown that $\factor$ is a factor of type $\tn$. To
this end, we
prove the following sufficiency condition for ergodicity.

\begin{lem}\label{l:ergodicity}
Let $K$ be group which acts on $\Om$. If $K$ acts transitively on the
collection of
sets $\left\{ \Om^x :  x\in\G , |x|=m \right\}$ for each natural number
$m$, then $K$
acts ergodically on $\Om$.
\end{lem}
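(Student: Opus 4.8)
The plan is to show that any $K$-invariant Borel set $A\subseteq\Om$ (that is, one with $\nu(kA\triangle A)=0$ for every $k\in K$) satisfies $\nu(A)\in\{0,1\}$. I would do this by first proving that $A$ is stochastically independent of each basic cylinder, i.e. $\nu(A\cap\Om^x)=\nu(A)\,\nu(\Om^x)$ for every $x\in\G$, and then bootstrapping this independence to the whole Borel $\sigma$-algebra $\cB$. Since $A\in\cB$, applying the independence to $A$ itself forces $\nu(A)=\nu(A\cap A)=\nu(A)^2$, whence $\nu(A)\in\{0,1\}$.

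The first and main step is to establish that $A$ is equidistributed among the cylinders of a fixed length. Recall that for $|x|=m$ one has $\nu(\Om^x)=\frac{1}{n+1}\left(\frac1n\right)^{m-1}$, a value independent of $x$; since the $N_m$ sets $\Om^x$ with $x\in V^m$ partition $\Om$, this forces $\nu(\Om^x)=1/N_m$ for every such $x$. Now fix $m$ and two words $x,y$ with $|x|=|y|=m$. By the transitivity hypothesis there is $k\in K$ with $k\Om^x=\Om^y$, and using $kA=A$ (modulo null sets) together with the fact that $k$ preserves $\nu$ I obtain
\[
\nu(A\cap\Om^y)=\nu(kA\cap k\Om^x)=\nu\bigl(k(A\cap\Om^x)\bigr)=\nu(A\cap\Om^x).
\]
Thus $\nu(A\cap\Om^x)$ is constant over all $x$ of length $m$; summing over the $N_m$ cylinders gives $\nu(A\cap\Om^x)=\nu(A)/N_m=\nu(A)\,\nu(\Om^x)$, which is exactly the desired independence of $A$ from each cylinder.

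To finish I would invoke the $\pi$--$\lambda$ theorem. The family $\{\Om^x:x\in\G\}\cup\{\emptyset\}$ is a $\pi$-system, since $\Om^x\cap\Om^y$ is either empty or equal to whichever of the two cylinders is contained in the other, and by hypothesis it generates $\cB$. The collection $\cD=\{E\in\cB:\nu(A\cap E)=\nu(A)\,\nu(E)\}$ is a $\lambda$-system: it contains $\Om$, and it is closed under complements and under countable disjoint unions by additivity of $\nu$. By the previous step $\cD$ contains every cylinder, so $\cD\supseteq\cB$, and taking $E=A$ yields $\nu(A)=\nu(A)^2$ as claimed.

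The step I expect to be delicate is the measure computation in the second paragraph, which requires that the elements of $K$ act by measure-preserving transformations. This is exactly what is available in our application, where $K\leq[\G]_0$ and $[\G]_0$ consists of $\nu$-preserving maps; for a merely non-singular action one would instead have to check that the relevant Radon--Nikodym derivatives are constant on the cylinders $\Om^x$ (they take values among the powers of $n$ and are locally constant, by the computation cited from~\cite{FTN}), so that each $k$ preserves the \emph{relative} measure inside a cylinder and the ratio computation survives.
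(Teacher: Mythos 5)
Your proof is correct and follows essentially the same route as the paper's: both show that a $K$-invariant set of positive measure is equidistributed over the cylinders $\Om^x$ of each fixed length (the paper phrases this as the measure $\mu(X)=\nu(X\cap X_0)$ being a constant multiple of $\nu$, you as independence of $A$ from each cylinder) and then extend to the whole Borel $\sigma$-algebra via the generating family of cylinders. Your closing remark that the argument needs $K$ to act by $\nu$-preserving maps is apt --- the paper's proof uses this silently in the step $\nu(gX\cap X_0)=\nu(X\cap g^{-1}X_0)$, and it is harmless in the only application since there $K\leq[\G]_0$.
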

\begin{proof}
Suppose that $X_0\subseteq\Om$ is a Borel set which is invariant under $K$ and
such that $\nu(X_0)>0$. We show that this necessarily implies
$\nu(\Om\setminus X_0)=0$, thus establishing the ergodicity of the action.

Define a new measure $\mu$ on $\Om$ by $\mu(X)=\nu(X\cap X_0)$ for each Borel
set $X\subseteq\Om$. Now, for each $g\in K$,
\begin{eqnarray*}
\mu(gX)&=&\nu(gX\cap X_0) = \nu(X\cap g^{-1}X_0) \\
&\leq & \nu(X\cap X_0) + \nu(X\cap (g^{-1}X_0\setminus X_0)) \\
&=& \nu(X\cap X_0) \\
&=& \mu(X),
\end{eqnarray*}
and therefore $\mu$ is $K$-invariant. Since $K$ acts transitively on the
basic open
sets $\Om^x$ associated to words
$x$ of length $m$ this implies that
\[
\mu(\Om^x)=\mu(\Om^y)
\]
whenever $|x|=|y|$. Since $\Om$ is the union of $N_m$ disjoint sets
$\Om^x$, $x\in
V^m$, each of which has equal measure with respect to $\mu$, we deduce that
\[
\mu(\Om^x)=\frac{c}{N_m}
\]
for each $x\in V^m$, where $c=\mu(X_0)=\nu(X_0)>0$. Thus
$\mu(\Om^x)=c\nu(\Om^x)$ for every $x\in\G$.

Since the sets $\Om^x$, $x\in\G$ generate the Borel $\s$-algebra, we deduce that
$\mu(X)=c\nu(X)$ for each Borel set $X$. Therefore
\begin{eqnarray*}
\nu(\Om\setminus X_0) &=& c^{-1}\mu(\Om\setminus X_0) \\
&=& c^{-1}\nu((\Om\setminus X_0) \cap X_0) = 0 ,
\end{eqnarray*}
thus proving ergodicity.
\end{proof}

In the last of our technical results, we give a constructive proof of the
existence
of a countable ergodic subgroup of $[\G]_0$.

\begin{lem}\label{ergodic subgroup of full group}
There is a countable ergodic group $K\leq\aut(\Om)$ such that $K\leq [\G]_0$.
\end{lem}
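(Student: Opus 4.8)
The plan is to realise $K$ as the group generated by countably many measure-preserving involutions, each one interchanging a single pair of boundary cylinders of the same level while fixing all the others, and then to appeal to Lemma~\ref{l:ergodicity}. Precisely, for every $m\in\NN$ and every pair $x,y\in V^m$ I want to produce an element $T_{x,y}\in[\G]_0$ with $T_{x,y}(\Om^x)=\Om^y$, $T_{x,y}(\Om^y)=\Om^x$, and $T_{x,y}=\mathrm{id}$ on the remaining cylinders $\Om^z$, $z\in V^m$. Granting this, $K=\langle T_{x,y} : m\in\NN,\ x,y\in V^m\rangle$ is countable (countably many levels, finitely many pairs at each level) and contained in $[\G]_0$; moreover, since $T_{x,y}$ itself carries $\Om^x$ onto $\Om^y$, the group $K$ acts transitively on $\{\Om^x : x\in V^m\}$ for every $m$, and Lemma~\ref{l:ergodicity} then yields ergodicity.

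The crux is thus to build, for $x,y\in V^m$, a measure-preserving bijection $\phi_{x,y}\colon\Om^x\to\Om^y$ that is implemented by $\G$, in the sense that $\phi_{x,y}(\w)\in\G\w$ almost everywhere; one then declares $T_{x,y}$ to be $\phi_{x,y}$ on $\Om^x$, $\phi_{x,y}^{-1}$ on $\Om^y$, and the identity elsewhere. Because $\nu(\Om^x)=\nu(\Om^y)$ and $\phi_{x,y}$ transports $\nu|_{\Om^x}$ to $\nu|_{\Om^y}$, such a $T_{x,y}$ automatically preserves $\nu$ and lies in $[\G]$. The clean case is when $x$ and $y$ terminate in the same generator: an end of $\Om^x$ is a reduced word $xu$, the admissible tails $u$ are exactly those admissible after $y$, and the ``prefix replacement'' $xu\mapsto yu$ is a bijection $\Om^x\to\Om^y$. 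I would verify that on $\Om^x$ this map agrees with the single group element $g=yx^{-1}$, and that it preserves $\nu$ since it sends each sub-cylinder $\Om^{xw}$ onto $\Om^{yw}$, a cylinder of equal length.

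When $x$ and $y$ end in different generators, prefix replacement breaks down, because the letter $\bar x$ forbidden immediately after $x$ differs from the letter $\bar y$ forbidden after $y$; here I would define $\phi_{x,y}$ recursively. At the first stage the $n-1$ children $\Om^{x\alpha}$ with $\alpha\notin\{\bar x,\bar y\}$ (the letters admissible after both $x$ and $y$) are sent by $g=yx^{-1}$ onto $\Om^{y\alpha}$, which is the clean case since $x\alpha$ and $y\alpha$ share the terminal letter $\alpha$; this leaves exactly one child $\Om^{x\bar y}$ of $x$ to be matched with the one remaining child $\Om^{y\bar x}$ of $y$. These again terminate in distinct letters, so I recurse on that pair. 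After $k$ stages the still-undefined region is a single nested cylinder at level $m+k$, of measure $\tfrac{1}{n+1}n^{-(m+k-1)}\to 0$, so $\phi_{x,y}$ is defined almost everywhere, is almost everywhere a measure-preserving bijection onto $\Om^y$, and is given piecewise by elements of $\G$.

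The step I expect to be the main obstacle is exactly this mismatched-terminal-letter case: reconciling \emph{exact} measure preservation with membership in the full group is what forces the recursive, piecewise construction, and the delicate point is to confirm that the residual cylinders do shrink to a null set, so that $\phi_{x,y}$, and hence $T_{x,y}$, is a genuine almost-everywhere automorphism of $(\Om,\nu)$ rather than merely a partially defined map.
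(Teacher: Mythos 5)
Your proposal is correct and follows essentially the same route as the paper: generators of $K$ swap a pair of level-$m$ cylinders and fix everything else, the matched-terminal-letter case is handled by a single left multiplication by $yx^{-1}$, and the mismatched case is handled by the same recursive peeling-off of children, with the undefined region shrinking to a single null residual cylinder, after which Lemma~\ref{l:ergodicity} gives ergodicity. The only (cosmetic) differences are that you make the extension of $\phi_{x,y}$ to an involution of all of $\Om$ explicit and simply discard the residual point rather than assigning it a value.
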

\begin{proof}
Let $x,y\in V^m$. We construct a measure preserving automorphism $k_{x,y}$ of
$\Om$ such that
\begin{enumerate}
\item $k_{x,y}$ is almost everywhere a bijection from $\Om^x$ onto $\Om^y$,
\item $k_{x,y}$ is the identity on $\Om\setminus(\Om^x\cup\Om^y)$.
\end{enumerate}
It then follows from Lemma~\ref{l:ergodicity} that the group
\[
K=\left< k_{x,y} : \{x,y\}\subseteq V^m, m\in\NN \right>
\]
acts ergodically on $\Om$ and the construction will show explicitly that
$K\leq [\G]_0$.

Fix $x,y\in V^m$ and suppose that we have reduced expressions $x=x_1\ldots
x_m$, and $y=y_1\ldots y_m$.

Define $k_{x,y}$ to be left multiplication by $yx^{-1}$ on each of the sets
$\Omega^{xz}$ where $|z| = 1$ and $z \notin \{x_m^{-1},y_m^{-1}\}$.
Then $k_{x,y}$ is a measure preserving bijection from each such set onto
$\Omega^{yz}$. If $y_m = x_m$ then $k_{x,y}$ is now well defined everywhere on
$\Omega^x$.

Suppose now that $y_m \ne x_m$. Then $k_{x,y}$ is defined on the set $\Omega^x
\setminus \Omega^{xy_m^{-1}}$ ,  which it maps bijectively onto $\Omega^y
\setminus \Omega^{yx_m^{-1}}$. Now define $k_{x,y}$ to be left multiplication by
$yx_m^{-1}y_mx^{-1}$ on each of the sets $\Omega^{xy_m^{-1}z}$ where $|z| = 1$
and  $z \notin \{x_m, y_m \}$.  Then $k_{x,y}$ is a measure preserving
bijection of
each such $\Omega^{xy_m^{-1}z}$ onto $\Omega^{yx_m^{-1}z}$.

Thus we have extended the domain of $k_{x,y}$ so that it is now defined on
the set
$\Omega^x \setminus \Omega^{xy_m^{-1}x_m}$, which it maps bijectively onto
$\Omega^y \setminus \Omega^{yx_m^{-1}y_m}$.

Next define $k_{x,y}$ to be left muliplication by
$yx_m^{-1}y_mx_m^{-1}y_mx^{-1}$ on the sets $\Omega^{xy_m^{-1}x_mz}$  where
$|z| = 1$ and $z \notin  \{x_m^{-1},y_m^{-1}\}$.

Continue in this way. At the jth step $k_{x,y}$ is a measure preserving
bijection from $\Omega^x \setminus X_j$ onto $\Omega^y \setminus Y_j$
where $\nu(X_j)\rightarrow 0$  as $j\rightarrow \infty$ so that eventually
$k_{x,y}$ is defined almost everywhere on $\Om$. Finally, define
\[
k_{x,y}(xy_m^{-1}x_my_m^{-1}x_my_m^{-1}x_m\ldots) =
yx_m^{-1}y_mx_m^{-1}y_mx_m^{-1}y_m\ldots
\]
thus defining $k_{x,y}$ everywhere on $\Om$ in such a way that its action is
pointwise approximable by $\G$ almost everywhere. Hence
\[
K=\left< k_{x,y} : \{x,y\}\subseteq V^m, m\in\NN \right>
\]
is a countable group with an ergodic measure-preserving  action on $\Om$ and
$K\leq [\G]_0$.
\end{proof}

We are now in a position to prove our main result.

\pagebreak[2]
\begin{thm}
The von Neumann algebra $\factor$ is the hyperfinite factor of
type~$\tn$.
\end{thm}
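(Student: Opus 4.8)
The plan is to assemble the pieces already established. By the discussion at the start of Section~2, the action of $\G$ on $\Om$ is free and ergodic, so $\factor$ is a factor; to identify its type it suffices to compute the ratio set $r(\G)$ and invoke Corollary~\ref{factor if free and ergodic}. The generators of $\G$ satisfy $d(ge,e)=1$, so the hypothesis of Proposition~\ref{ratio set of G} on the existence of a length-one element is automatic. What remains is to supply the required ergodic subgroup $K\leq[\G]_0$, and this is exactly what Lemma~\ref{ergodic subgroup of full group} furnishes.

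First I would invoke Lemma~\ref{ergodic subgroup of full group} to obtain a countable group $K\leq[\G]_0$ acting ergodically on $\Om$. Taking $G=\G$ in Proposition~\ref{ratio set of G}, the two hypotheses---a generator $g$ with $d(ge,e)=1$ and an ergodic subgroup $K\leq[\G]_0$---are both met, so the proposition yields
\[
r(\G)=\left\{ n^{k}: k\in\ZZ\}\cup\{ 0\right\}.
\]
Since the action of $\G$ is free, Corollary~\ref{factor if free and ergodic} then applies directly with $G=\G$ and identifies $\factor$ as a factor of type $\tn$.

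It then remains only to argue hyperfiniteness. Here I would appeal to the standard structure theory: the crossed product $L^{\infty}(\Om,\nu)\rtimes\G$ arising from an amenable (indeed, discrete abelian-by-free, but in any case amenable) action—or more precisely from a countable measured equivalence relation that is hyperfinite—is approximately finite dimensional. Concretely, $\G$ is a free product of finitely many copies of $\ZZ_2$ and $\ZZ$, and the orbit equivalence relation it generates on the standard probability space $(\Om,\nu)$ is hyperfinite by Krieger's theory (equivalently, by Connes--Feldman--Weiss, the orbit relation of any action of an amenable group is hyperfinite), so the associated factor is the unique hyperfinite factor of type $\tn$.

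The main obstacle I expect is the hyperfiniteness claim rather than the type computation, since the type is essentially handed to us by the preceding results. One must confirm that the relevant equivalence relation is amenable/hyperfinite so that the uniqueness theorem for the hyperfinite $\tn$ factor applies; this is where a citation to Connes--Feldman--Weiss or to Krieger's classification does the real work, and care is needed to ensure the measure-theoretic freeness and the amenability of the acting group are correctly marshalled. Everything else is a direct application of Proposition~\ref{ratio set of G}, Corollary~\ref{factor if free and ergodic}, and Lemma~\ref{ergodic subgroup of full group}.
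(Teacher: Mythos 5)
The first half of your argument---freeness and ergodicity of the $\G$-action, the generator $g$ with $d(ge,e)=1$, the ergodic subgroup $K\leq[\G]_0$ from Lemma~\ref{ergodic subgroup of full group}, then Proposition~\ref{ratio set of G} and Corollary~\ref{factor if free and ergodic} to identify $\factor$ as a factor of type $\tn$---is exactly the paper's route and is fine.

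The hyperfiniteness step, however, contains a genuine error. You assert that $\G$ is an amenable group (``discrete abelian-by-free, but in any case amenable'') and then invoke the fact that orbit relations of amenable group actions are hyperfinite. But $\G\cong\ZZ_2*\cdots*\ZZ_2*\ZZ*\cdots*\ZZ$ with $s+2t=n+1\geq 3$, so $\G$ always contains a free subgroup of rank $2$ (e.g.\ $\ZZ*\ZZ=F_2$, and $\ZZ_2*\ZZ_2*\ZZ_2$ and $\ZZ_2*\ZZ$ both contain $F_2$ as a finite-index subgroup); hence $\G$ is \emph{not} amenable for any $n\geq 2$, and the Connes--Feldman--Weiss/Krieger argument as you state it does not apply. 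What is true, and what the paper uses, is that the \emph{action} of $\G$ on its boundary $\Om$ is amenable in the sense of Zimmer: $\G$ is word hyperbolic (it acts on a tree) and $\Om$ is its Gromov boundary, so amenability of the boundary action follows from Adams' theorem. Amenability of the action (equivalently, of the orbit equivalence relation) is what gives injectivity of the crossed product, and then Connes' uniqueness theorem for the injective factor of type $\tn$ finishes the proof. Your instinct that the real work lies in this step is right, but the amenability has to be attached to the action, not to the group.
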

\begin{proof}
By applying Corollary~\ref{factor if free and ergodic} with $G=\G$, $g\in\G$ any
generator of $\G$, and $K$ as in Lemma~\ref{ergodic subgroup of full group} we
conclude that $\factor$ is a factor of type $\tn$.

To see that the factor is hyperfinite simply note that the action of $\G$ is
amenable as a result of~\cite[Theorem~5.1] {Adams}. We refer
to~\cite[Theorem~4.4.1]{Connes78} for the uniqueness of the hyperfinite
factor of
type $\tn$.
\end{proof}

\begin{remark}
Taking different measures on $\Om$ should yield hyperfinite factors of type
$\tl$
for any $0<\lambda<1$. We have concentrated on the geometrically interesting
case.
\end{remark}

\begin{remark}
In~\cite{Spielberg1990}, Spielberg constructs $\tl$  factor states on the
algebra
$\cO_2$. The reduced $C^*$-algebra $C(\Om)\rtimes_r\G$ is a Cuntz-Krieger
algebra $\cO_A$ by~\cite{Spielberg1991}.  What we have done is construct a type
$\tn$   factor state on some of these algebras $\cO_A$.
\end{remark}

\begin{remark}
From~\cite[p.~476]{Connes78}, we know that if $\G=\QQ\rtimes\QQ^*$ acts
naturally on $\QQ_p$, then the crossed product $L^\infty(\QQ_p)\rtimes\G$ is the
hyperfinite factor of type $\tp$. This may be proved geometrically as above by
regarding the the boundary of the homogeneous tree of degree $p+1$ as the one
point compactification of $\QQ_p$ as in~\cite{CKW}.
\end{remark}

\end{document}